\documentclass[11pt,letterpaper]{amsart}
 \usepackage[T1]{fontenc}
 \usepackage[cp1250]{inputenc}
 \textwidth 110mm \textheight 150mm
\usepackage{xcolor}
\usepackage[normalem]{ulem}

 \newtheorem{theorem}{Theorem}

\theoremstyle{definition}

\theoremstyle{remark}
\newtheorem{remark}{Remark}
\theoremstyle{lemma}
\newtheorem{lemma}{Lemma}

\begin{document}

\title{Analogues of $s$-potential and $s$-energy \\ of mass distribution \\ on Cantor dyadic group \\ and their relation \\ to Hausdorff dimension}

\subjclass[2020]{Primary 42C40, 43A75. 28A78}

\author[SKVORTSOV]{VALENTIN SKVORTSOV}
\address{Lomonosov Moscow State University, Mathematics Department 
and Moscow Center of Fundamental and Applied Mathematics\\
Moscow, 119991 Russia}
\email{vaskvor2000@yahoo.com}

\maketitle
\thispagestyle{empty}








\begin{abstract}
 We introduce an analogue of Riesz $s$-potetial and $s$-energy, $0<s<1$, of a mass distribution $\mu$ on the Cantor dyadic group $G$ by defining a respactive $s$-kernel. Then we relate Hausdorff dimension of a set $E\subset G$ to the value of $s$-energy of the mass distribution $\mu$ on this set $E$. Namely we prove that if on a set $E$ there exists a mass distribution $\mu$  with finite $s$-energy, then the Hausdorff dimention of $E$ is at least $s$.
    The same condiion can be expressed also in terms of Fourier coefficients of $\mu$ with respect to Walsh system on the group $G$. 

{\it Keywords:}	Cantor dyadic group, Walsh system, $s$-potential, $s$-energy, mass distribution, Hausdorff dimension     

\end{abstract}

\section{Introduction}\label{s1}
The classical theory of potential and energy and its connection with harmonic analysis on a one-dimensional torus $T$ and in $\mathbb R$ has been widely covered in the literature (see \cite{Kahane}, \cite{Falconer}).

Riesz $s$-potential on $\mathbb R$ or on $T$ is defined as a convolution of a {\it kernel of order} $s,$ $0<s\leq 1,$ $|x|^{-s}$
($|\sin(t/2)|^{-s}$  in the case of $T$) and a positive finite measure ({\it mass distribution}) $\mu$ supported on a set. $s$-Energy is the integral of $s$-potential with respect to $\mu$.

One of the topics which links  potential and energy theory to Fourier analysis is some deep results which relate Hausdorff dimension of a set to potential of a mass distribution on this set  (see \cite[Ch. III]{Kahane} or
\cite[Theorem 4.13]{Falconer}).

A problem of extending the potential theory and its methods to a case of groups different from one-dimentional torus was touched upon only in a few publication (see for example \cite{var} and \cite{vod} for the use of tools from potential theory to study properties of connected Lie groups).

In the present paper we introduce in section 3 an analogue of  $s$-kernal for the Cantor dyadic group and study its properties. Having this kernel we define in Section 4  $s$-potetial and $s$-energy on the Cantor group and obtain in Subsection 4.1 the main result of the paper on relation of Hausdorff dimension of a set $E\subset G$ to the value of $s$-energy of a mass distribution $\mu$ on this set $E$. In Subsection 4.2 the same relation is expressed also in terms of Fourier coefficients of $\mu$ with respect to Walsh system (see \cite{gol}) on the group $G$. 

\section{Preliminaries}
\noindent 
Cantor dyadic group is the set of sequences whose entries are either $0$ or $1$, namely,
sequences of the form $x=\{x_j\}_{j=1}^{\infty}$, where the group operation is a coordinate-wise addition modulo 2, i.e.,
the sum of two sequences $x=\{x_j\}_{j=1}^{\infty}$ and $y=\{y_j\}_{j=1}^{\infty}$
is the sequence $z=\{z_j\}_{j=1}^{\infty}$ given by
$$z_j=\begin{cases}
	0&\text{if $x_j=y_j$,}\\
	1&\text{if $x_j\neq y_j$.}
\end{cases}$$

The group G contains a strictly
decreasing sequence
\begin{equation*} G=G_{0}\supset G_{1} \supset G_{2}\ldots\supset G_{n} \supset \ldots
\end{equation*} 
of  subgroups $G_n:=\{x\in G: x_j=0, 1\leq j \leq n\}$ forming a base of neighborhoods of the zero element.
The subgroups $G_n$ and all their cosets are clopen sets in the topology introduced above. Let $K_n$
denote an arbitrary coset of the subgroup $G_n$, and let $K_n(x)$ be the coset of $G_n$ containing the element
$x$, so that $K_n(x)=G_n+x$. It is clear that the order of the quotient group $G/G_n$ is $2^n$. When it is needed to list
all the cosets $K_n$, we number them as $K^i_n,$
$1\leq i \leq 2^n,$  so that $K^1_n= G_n$ and $K^1_n= K_{n+1}^1\cup K_{n+1}^2$ . Then

\begin{equation}\label{sum}
	\{0\}=\bigcap_{n=1}^{\infty}K^1_n \, \,  \,  \text{and} \, \, \,
	G\setminus \{0\}=\bigcup_{n=1}^{\infty}K^2_n.
\end{equation}
Let $\lambda$ denote a normalized Haar measure on $G$. Since the measure $\lambda$ is translation invariant,
it follows that, for all 
$n\geq 0$
$$\lambda(G_n)=\lambda(K_n)=\frac{1}{2^n}.$$
It is known (see, for example, \cite{Pon}) that $G$ as a topological group is metrizable and a metric can be given as a distance between elements $x, \, y\, \in G$ defined by
\begin{equation}\label{rho}
	\rho(x, y)=\sum_{i=1}^{\infty}2^{-i}|x_i-y_i|.
\end{equation}
Note that with this metric
\begin{equation}\label{rhoKn}
	\rho(x, y)<2^{-n }\, \, \text{if and only if} \, \, x, \, y \in K_n(x)=K_n(y).
\end{equation}

Letting $n$ be given  by its dyadic expansion 
\begin{equation}
	n=\sum_{i=0}^kn_i2^i,\label{drops}
\end{equation}
where $n_i\in\{0,1\}$ if $i<k$ and~$n_k=1,$
the $n$-th Walsh function on this group is defined as 
\begin{equation}\label{wnG}
	w_n(x)=(-1)^{\sum_{i=0}^kn_ix_{i+1}}.
\end{equation}
Note that the system of Walsh functions constitutes the system of characters of the group $G$. 

The following properties of the Walsh functions will be used in our further computation. Let $2^{n-1}\leq k<2^n.$ Then
\begin{equation}\label{1-1}
	w_k(x)=\begin{cases}
		1&\text{if} \, \, \, x\in K_n^1,\\
		-1&\text{if} \, \, \, x\in K_n^2.
	\end{cases}
\end{equation}
and
\begin{equation}\label{0}
	\int_{K_{n-1}}w_k\, d\lambda=0,
\end{equation}

\section{Kernel of $s$-potential on Cantor group}

We define {\it $s$-kernel,} $0<s\leq 1,$ by 

$$\phi_s(x)=\begin{cases}
	0&\text{if $x=0$,}\\
	2^{s(n-1)}&\text{if $x\in K_n^2, \, \, n\geq 1$.}
\end{cases}$$
Note that the kernel  $\phi_s$ is positive everywhere on $G\setminus\{0\}$ and
\begin{equation}\label{>}
	\phi_s(x)\geq 2^{ns}
	\, \, \text{if} \, \, x\in  K_n^1=G_n.\end{equation}

Consider trancated kernels:
$$\phi_s^n(x)=\begin{cases}
	\phi(x)& \text{if $x\in G\setminus G_n \, \, n\geq 1$} \\
	2^{ns}&\text{if $x\in G_n\setminus\{0\},$.} \\
	0&\text{if $x=0$.}
\end{cases}$$
It is clear that
$\phi_s^n\nearrow \phi_s$ when $n\to \infty.$ Note that the inequality (\ref{>}) implies an estimate
\begin{equation}\label{psin>}
	\phi_s^n(x)\geq 2^{ms}
	\, \, \text{if} \, \, x\in  K_m^1=G_m \, \, \text{and} \, \,  m\leq n.
\end{equation}

We need the following properties of the kernel $\phi_s^{n}$ and its  Walsh-Fourier coefficients.
\begin{lemma}\label{lemma}
	The kernel $\phi_s^n$ is a polinomial with respect to the Walsh system with positive coefficient $\widehat{\phi_s^{n}}(k)$ for $1\leq k<2^n$ and 
	$\widehat{\phi_s^n}(k) \nearrow \widehat{\phi_s}(k)$ for each $k$ when $n\to \infty.$
\end{lemma}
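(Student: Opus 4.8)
The plan is to compute the Walsh--Fourier coefficients of $\phi_s^n$ directly and exploit the nested structure of the cosets $K_m^2$. First I would write $\phi_s^n$ as an explicit linear combination of indicator functions: since $\phi_s^n$ is constant on each of the sets $\{0\}$, $K_m^2$ for $1\le m\le n$, and $G_n\setminus\{0\}$, and since $G\setminus\{0\}=\bigcup_{m\ge 1}K_m^2$ by (\ref{sum}), one has on $G$
$$
\phi_s^n=2^{ns}\mathbf 1_{G_n}+\sum_{m=1}^{n}2^{s(m-1)}\mathbf 1_{K_m^2},
$$
with the convention that the value at $0$ is irrelevant for $L^2$-purposes. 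Each $\mathbf 1_{G_m}$ is itself a Walsh polynomial of order $<2^m$ — indeed $\mathbf 1_{G_m}=2^{-m}\sum_{j=0}^{2^m-1}w_j$, because the partial sum of the first $2^m$ Walsh characters is the normalized Dirichlet-type kernel equal to $2^m$ on $G_m$ and $0$ off $G_m$ (this is the standard fact that the characters of $G/G_m$ sum to the coset-indicator). Since $\mathbf 1_{K_m^2}=\mathbf 1_{G_{m-1}}-\mathbf 1_{G_m}$, each summand, and hence $\phi_s^n$, is a Walsh polynomial of order strictly less than $2^n$; this already gives the ``polynomial'' assertion.

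Next I would extract $\widehat{\phi_s^n}(k)$ for $1\le k<2^n$. Fix $k$ with $2^{p-1}\le k<2^p$, so $1\le p\le n$. Using the representation above together with $\widehat{\mathbf 1_{G_m}}(k)=2^{-m}$ when $k<2^m$ and $=0$ when $k\ge 2^m$ (immediate from $\mathbf 1_{G_m}=2^{-m}\sum_{j<2^m}w_j$ and orthonormality of the $w_j$), a short computation collapses the telescoping sum: only the terms with $m\ge p$ contribute, and one gets
$$
\widehat{\phi_s^n}(k)=2^{ns}\cdot 2^{-n}+\sum_{m=p}^{n}2^{s(m-1)}\bigl(2^{-(m-1)}-2^{-m}\bigr)
=2^{n(s-1)}+\tfrac12\sum_{m=p}^{n}2^{(s-1)(m-1)}.
$$
Since $0<s\le 1$ every term here is strictly positive, so $\widehat{\phi_s^n}(k)>0$ for all $1\le k<2^n$. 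Moreover, comparing this expression for $n$ and for $n+1$: the sum over $m$ gains the extra nonnegative term $\tfrac12 2^{(s-1)n}$, while the leading term changes from $2^{n(s-1)}$ to $2^{(n+1)(s-1)}$; one checks $2^{(n+1)(s-1)}+\tfrac12 2^{(s-1)n}=2^{(s-1)n}\bigl(2^{s-1}+\tfrac12\bigr)\ge 2^{(s-1)n}$ since $2^{s-1}\ge\tfrac12$, so the whole coefficient is nondecreasing in $n$; it is also bounded above (the series $\sum_{m\ge p}2^{(s-1)(m-1)}$ converges when $s<1$, and when $s=1$ one should treat $\phi_1$ separately, noting $\widehat{\phi_1^n}(k)=1+\tfrac{n-p+1}{2}$ does diverge — so in the case $s=1$ the statement ``$\widehat{\phi_s^n}(k)\nearrow\widehat{\phi_s}(k)$'' is understood with $\widehat{\phi_1}(k)=+\infty$, i.e. $\phi_1\notin L^1$; I would phrase the lemma for $0<s<1$ where $\phi_s$ is genuinely integrable, matching the abstract's hypothesis). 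For $0<s<1$ this gives $\widehat{\phi_s^n}(k)\nearrow\widehat{\phi_s}(k)=2^{(s-1)(p-1)}/\bigl(2(1-2^{s-1})\bigr)$, which is exactly the monotone-convergence claim, and it is also consistent with the pointwise monotone convergence $\phi_s^n\nearrow\phi_s$ combined with the fact that $\widehat{(\cdot)}(k)=\int(\cdot)w_k\,d\lambda$ is given by integrating against a bounded function on a finite-measure space.

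The main obstacle is not any single hard estimate but keeping the bookkeeping of the telescoping sums correct, in particular the off-by-one indices in $K_m^2=G_{m-1}\setminus G_m$ and in the exponent $2^{s(m-1)}$, and being careful about the endpoint $s=1$ where $\phi_s$ fails to be integrable. A cleaner alternative that avoids explicit summation: observe $\phi_s=\sum_{m=1}^\infty 2^{s(m-1)}\mathbf 1_{K_m^2}$ and note $\phi_s^n$ differs from the partial sum $\sum_{m=1}^{n}2^{s(m-1)}\mathbf 1_{K_m^2}$ only by the constant $2^{ns}$ on $G_n$ (versus $0$), i.e. $\phi_s^n=\bigl(\sum_{m=1}^{n}2^{s(m-1)}\mathbf 1_{K_m^2}\bigr)+2^{ns}\mathbf 1_{G_n}$; each $\mathbf 1_{K_m^2}$ and $\mathbf 1_{G_n}$ is a Walsh polynomial of order $<2^n$ with coefficients one can read off from (\ref{1-1}), (\ref{0}) and the Dirichlet-kernel identity, and positivity plus monotonicity then follow termwise. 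I would present whichever of these two routes reads most transparently, likely the explicit formula for $\widehat{\phi_s^n}(k)$ since it also feeds directly into the energy computations of Section 4.
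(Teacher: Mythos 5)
Your route—decomposing $\phi_s^n$ into indicator functions of $G_n$ and the $K_m^2$ and reading coefficients off the Dirichlet-kernel identity $\mathbf 1_{G_m}=2^{-m}\sum_{j<2^m}w_j$—is legitimate and genuinely different from the paper's direct integral computation, and your treatment of monotonicity in $n$ and of the passage to the limit (monotone/dominated convergence, with the sensible restriction $0<s<1$) is fine. But your explicit formula contains a real error that breaks the positivity step. For $2^{p-1}\le k<2^p$ one has $\widehat{\mathbf 1_{G_{p-1}}}(k)=0$ (not $2^{-(p-1)}$), so the $m=p$ term of your telescoping sum is $\widehat{\mathbf 1_{K_p^2}}(k)=0-2^{-p}=-2^{-p}$, i.e. it enters with a \emph{minus} sign. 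The correct expression is
\[
\widehat{\phi_s^n}(k)=2^{n(s-1)}-\tfrac12\,2^{(s-1)(p-1)}+\tfrac12\sum_{m=p+1}^{n}2^{(s-1)(m-1)},
\]
not the one you wrote: e.g.\ for $s=1$, $n=1$, $k=1$ a direct check gives $\widehat{\phi_1^1}(1)=2\cdot\tfrac12-1\cdot\tfrac12=\tfrac12$, while your formula yields $\tfrac32$. Hence the claim ``every term here is strictly positive,'' and equally the remark in your alternative route that positivity ``follows termwise,'' is false: in the block $2^{p-1}\le k<2^p$ the indicator $\mathbf 1_{K_p^2}$ contributes negatively. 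Your stated limit value of $\widehat{\phi_s}(k)$ is off by the factor $2^s-1$ for the same reason.

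The lemma survives, but positivity now needs an argument. Within your framework, sum the geometric series in the corrected formula: with $r=2^{s-1}\in(\tfrac12,1)$,
\[
\widehat{\phi_s^n}(k)=\frac{(2r-1)\bigl(r^{p-1}-r^{n}\bigr)}{2(1-r)}>0,
\]
since $r>\tfrac12$ and $r^{p-1}>r^{n}$; this also gives the correct limit $\widehat{\phi_s}(k)=\frac{2^{s}-1}{2-2^{s}}\,2^{(s-1)(p-1)}$ (consistent with the paper's remark $\widehat{\phi_s}(k)=O(k^{s-1})$) and reproduces the paper's value $2^{n(s-1)}(1-2^{-s})$ when $p=n$. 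The paper avoids the closed form altogether: by (\ref{0}) only the integral over $G_{p-1}$ survives, so $\widehat{\phi_s^n}(k)=\int_{K_p^1}\phi_s^n\,d\lambda-\int_{K_p^2}\phi_s^n\,d\lambda$, and the lower bound (\ref{psin>}) ($\phi_s^n\ge 2^{ps}$ on $K_p^1$, against the constant $2^{(p-1)s}$ on $K_p^2$) gives positivity at once. Either repair is short, but as written your proof does not establish the positivity assertion of the lemma.
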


\begin{proof}
	It is easy to check, using (\ref{0}), that $\widehat{\phi_s^{n}}(k)=0,$ for $k\geq 2^n.$
	If $2^{n-1}\leq k<2^n,$ then due to (\ref{1-1}), (\ref{0}) and the definition of $\phi_s^{(n)}$ we have  $$\widehat{\phi_s^{n}}(k)=\int_{K_n^1}\phi_s^n d\lambda - \int_{K_n^2}\phi_s^n d\lambda= 2^{ns}2^{-n}-2^{(n-1)s}2^{-n}=2^{n(s-1)}(1-2^{-s})>0.$$ A similar computation, based on using (\ref{psin>}), shows that $\widehat{\phi_s^{n}}(k)>0$ also for $2^{m-1}\leq k<2^m$ with $m<n.$ The inequality$\int_{K_m^1} \phi^{n+1}_s> \int_{K_m^1} \phi^n_s$ for $m<n$ implies that 
	$\widehat{\phi_s^{n+1}}(k)>\widehat{\phi_s^n}(k)$ if $2^{m-1}\leq k<2^m$ with $m\leq n.$ Moreover, $\widehat{\phi_s^{n+1}}(k)>0$ for $2^{n}\leq k<2^{n+1}$ while  
	$\widehat{\phi_s^n}(k)=0$ for the same $k.$ This proves that the sequence $\{\widehat{\phi_s^n}(k)\}_n$ increases for each fixed $k$, The limit of this sequence is $\widehat{\phi_s}(k)$ because passage to the limit under integral sign in integrals $\int_G\phi_s^n(x) w_k(x)d\lambda$ is justified by the inequality $|\phi_s^n w_k|<\phi_s$ with $\phi_s$ being obviously summable on $G$ with respect to the measure $\lambda$
\end{proof}

\section{$s$-potentia,  $s$-energy and Hausdorff dimension}

Having $s$-kernel, we define, in a way similar to the case of the real line or circle $T$ (see \cite{Kahane} and \cite{Falconer}), $s$-potential, with $0<s< 1,$ at a pont $x \in G$  as a convolution of $s$-kernel and 
a mass distribution $\mu$ 
$$\Phi_s(x)=\int_G\phi_s(x-y)\,d\mu(y)$$
and $s$-energy of $\mu$ as
$$I_s(\mu)=\int_G \Phi_s(x)\, d\mu(x).$$
In the same way we define $s$-potential $\Phi_s^n$ and $s$-energy $I_s^n$ corresponding to the kernel $\phi_s^n.$

We recall (see \cite{Falconer}, \cite{Rod}) the definition of Hausdorff measure and Hausdorff dimension of a set $A$ in a metric space $M$. For any $\delta$  we define  
\begin{equation}\label{Hsdelta}
	H_{\delta}^s(A)=\inf\left\{\sum_{i=1}^{\infty}(\text{diam} \, I_i)^s :  \, A\subset \cup_{i=1}^{\infty} I_i, \, \text{diam}\,  I_i < \delta \, \, \text{for all} \, \,  n  \right\}.
\end{equation}
A cover $\{I_i\}_i$ of a set $A$ in the above definition is called $\delta$-cover. 
As $\delta$ decreases, the infimum in (\ref{Hsdelta}) increases and we define the {\it$s$-dimensional Hausdorff measure} of $A$ as 
\begin{equation}\label{Hsmeasure}
	H^s(A)=\lim_{\delta\to 0}H_{\delta}^s(A).
\end{equation}
The {\it Hausdorff dimension} of a set $A$ is defined as
\begin{equation}\label{Hdim}
	\dim_{H}(A)=\inf\{s: H^s(A)=0\} \, (=\sup\{s: H^s(A)=\infty\}).
\end{equation}
$H^s$ is known (see \cite{Rod}) to be a metric outer meassure, and so it is a measure on the class of Borel sets.

\subsection{Relation of Hausdorff dimension of a set to the value of $s$-energy of a mass distribution on this set}

Here and everywhere in what follows we assume, that $0<s<1.$
\begin{theorem}\label{main}
	Let $E$ be a subset of the group $G$. If there is a non-atomic mass distribution $\mu$ on the set $E$ which has finite $s$-energy, then $E$ has Hausdorff dimention  at least $s$.
\end{theorem}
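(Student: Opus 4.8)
The plan is to mimic the classical Frostman-type argument adapting it to the ultrametric-like structure of $G$. The goal is to show that if $\mu$ is a non-atomic mass distribution on $E$ with $I_s(\mu)<\infty$, then $H^s(E)>0$, which by the definition \eqref{Hdim} forces $\dim_H(E)\geq s$. The whole argument hinges on the fact that the kernel $\phi_s$ dominates a negative power of the metric $\rho$: since any coset $K_n$ has diameter $2^{-n}$ (by \eqref{rhoKn}) and $\phi_s(x)=2^{s(n-1)}$ on $K_n^2$, one has, up to a multiplicative constant, $\phi_s(x-y)\asymp \rho(x,y)^{-s}$. Concretely, if $x\neq y$ and $n$ is the unique index with $x-y\in K_n^2$, then $2^{-n}\leq\rho(x,y)<2^{-n+1}$ and $\phi_s(x-y)=2^{s(n-1)}$, so $\phi_s(x-y)\geq 2^{-s}\rho(x,y)^{-s}$.

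First I would establish the key local estimate: finiteness of $I_s(\mu)$ implies that $\mu$ puts little mass on small balls. Precisely, I claim $\lim_{n\to\infty}\sup_{K_n}\mu(K_n)2^{sn}=0$, or at least that $\mu(K_n(x))\leq C\,2^{-sn}$ for $\mu$-a.e.\ $x$. The argument: the $s$-potential $\Phi_s(x)=\int_G\phi_s(x-y)\,d\mu(y)$ is finite for $\mu$-a.e.\ $x$ (since its integral against $\mu$ is $I_s(\mu)<\infty$), and for such $x$, restricting the integral to $y\in K_n(x)\setminus\{x\}$ and using $\phi_s(x-y)\geq 2^{s(n-1)}$ there (each such $y$ lies in some $K_m^2$-translate with $m\geq n$, giving $\phi_s\geq 2^{s(m-1)}\geq 2^{s(n-1)}$) yields $\Phi_s(x)\geq 2^{s(n-1)}\mu(K_n(x)\setminus\{x\})=2^{s(n-1)}\mu(K_n(x))$, using non-atomicity. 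Hence $\mu(K_n(x))\leq 2^{1-s}\Phi_s(x)\,2^{-sn}$, and since $\Phi_s(x)<\infty$ a.e., this gives the desired control. To get it uniformly one can discard a set of small $\mu$-measure where $\Phi_s$ is large.

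Next I would run the mass-distribution principle. Take any $\delta$-cover $\{I_i\}$ of $E$. Since the clopen cosets $\{K_n\}$ form a neighborhood base and every ball is a coset by \eqref{rhoKn}, I may assume each $I_i$ is contained in a coset $K_{n_i}$ with $2^{-n_i}\leq\operatorname{diam}I_i$ — more carefully, $I_i$ with $\operatorname{diam} I_i<\delta$ is contained in a coset of diameter at most $2\operatorname{diam}I_i$, say $K_{n_i}$ with $2^{-n_i-1}<\operatorname{diam}I_i$. Then $\mu(I_i)\leq\mu(K_{n_i})\leq C\,2^{-sn_i}\leq C\,2^{s}(\operatorname{diam}I_i)^s$, so summing, $\mu(E)=\mu\bigl(\bigcup I_i\bigr)\leq\sum_i\mu(I_i)\leq C\,2^s\sum_i(\operatorname{diam}I_i)^s$. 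Taking the infimum over $\delta$-covers and then $\delta\to 0$ gives $H^s(E)\geq\mu(E)/(C\,2^s)>0$, since $\mu$ is a nontrivial mass distribution on $E$. Therefore $\dim_H(E)\geq s$.

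The main obstacle, and the point requiring real care, is the uniformity in the local estimate $\mu(K_n)\leq C\,2^{-sn}$: the bound $\mu(K_n(x))\leq 2^{1-s}\Phi_s(x)2^{-sn}$ only holds pointwise with a constant depending on $x$, whereas the mass-distribution step wants a single constant $C$ valid for every coset in the cover. The standard fix is a pigeonhole/truncation argument: since $\int_G\Phi_s\,d\mu=I_s(\mu)<\infty$, for any $\varepsilon>0$ the set $F=\{x:\Phi_s(x)>I_s(\mu)/\varepsilon\}$ has $\mu(F)<\varepsilon$; on $E\setminus F$ the constant is uniform, and the $\varepsilon$ of mass on $F$ can be absorbed because we only need $H^s(E)>0$, not a sharp value — one shows $H^s(E)\geq(\mu(E)-\varepsilon)\varepsilon/(2^s I_s(\mu))$ for all small $\varepsilon$, hence $H^s(E)>0$. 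A secondary technical point is the rounding of diameters to powers of $2$ and the matching of arbitrary cover sets to cosets, but this is routine given \eqref{rhoKn} and the clopenness of the $K_n$.
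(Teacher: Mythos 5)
Your proposal is correct, but it takes a genuinely different route from the paper's proof, and it is worth seeing what each buys. The paper splits $E$ into $E_1=\{x:\limsup_n \mu(K_n(x))2^{ns}>0\}$ and $F=E\setminus E_1$: on $E_1$ it shows $\Phi_s(x)=\infty$ by summing over pairwise disjoint ``annuli'' $K_{n_i}(x)\setminus K_{m_i}(x)$ (this is where it needs the limsup to be positive, not merely the potential to be large), so $\mu(E_1)=0$; on $F$ one has $\lim_n\mu(K_n(x))2^{ns}=0$, so after stratifying $F$ by the index $n(x)$ the Frostman constant $c$ in the covering argument can be taken arbitrarily small, which yields the stronger conclusion $H^s(E)=\infty$. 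You instead use the direct pointwise estimate $\Phi_s(x)\geq 2^{s(n-1)}\mu(K_n(x))$, valid wherever $\Phi_s(x)<\infty$ because $\phi_s\geq 2^{ns}$ on $G_n\setminus\{0\}$ by (\ref{>}) and $\mu(\{x\})=0$ by non-atomicity, then Chebyshev's inequality to get a single Frostman constant $I_s(\mu)/\varepsilon$ off an exceptional set of $\mu$-measure less than $\varepsilon$, and the mass-distribution principle for the restricted measure; note that for a coset meeting the good set the bound controls the full mass $\mu(K_n)$, so the only covering sets one must discard are those missing the good set, and they are irrelevant for covering it. This yields only $H^s(E)\geq(\mu(E)-\varepsilon)\varepsilon/(2^{s}I_s(\mu))>0$ rather than $H^s(E)=\infty$, but positivity already forces $\dim_H(E)\geq s$ (if $H^t(E)=0$ for some $t<s$, then $H^s(E)=0$ as well), so the theorem follows. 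The trade-off: your argument avoids the annuli construction and the stratification $F_{n_0}$, at the price of a weaker intermediate statement about $H^s(E)$; the paper's argument is slightly longer but delivers the sharper classical fact $H^s(E)=\infty$. Two harmless slips in your write-up: the constant in $\mu(K_n(x))\leq 2^{1-s}\Phi_s(x)2^{-sn}$ should be $2^{s}$ (or $1$ if you use the bound $2^{sn}$), and the provisional uniform claim $\lim_{n}\sup_{K_n}\mu(K_n)2^{sn}=0$ is neither proved nor needed, since your truncation step replaces it.
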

\begin{proof}.
Consider
$$E_1=\left\{x\in E: \, \, \overline{\lim_{n\to\infty}}\mu(K_n(x))2^{ns}>0\right\}.$$
Having fixed $x\in E_1$, we can find for any $\varepsilon>0$ a sequence $\{n_i\}$ so that $$\mu(K_{n_i}(x))\geq \varepsilon2^{-n_is}.$$
Due to continuity of measure $\mu,$ we can choose for each $i$ a number $m_i>n_i$ such that 
\begin{equation}\label{geq}
	\mu(K_{n_i}(x)\setminus K_{m_i}(x))\geq 2^{-1} \varepsilon2^{-n_is}.
\end{equation}
We can arrange that $n_{i+1}>m_i$ so that $K_{n_i}(x))\setminus K_{m_i}(x)$ with different $i$ do not intersect. Then having in mind (\ref{geq}), (\ref{>}) and the fact that if $y\in K_n(x)$ then $x-y\in K_n^1$, we estimate the $s$-potential at $x$:
\begin{gather*} \Phi_s(x)=\int_G\phi_s(x-y)\,d\mu(y)\geq\sum_i^{\infty}\int_{K_{n_i}(x))\setminus K_{m_i}(x)}\phi_s(x-y)\,d\mu(y)\\ \geq \sum_i^{\infty}2^{-1} \varepsilon2^{-n_is}2^{n_is}=\infty.
\end{gather*}
As   $\int_G\Phi_s(x)\,d\mu(x)<\infty$ by assumption, we get $\mu(E_1)=0$.

Denote $ F=E\setminus E_1.$ 
We have $$\lim_{n\to\infty}\mu(K_n(x))2^{ns}=0 \,\, \, \text{if} \, \, \, x\in F$$
Fix any $c>0$ which can be taken as small as we wish. Then for each point of $F$ we can choose a number $n(x)$ such that $\mu(K_n(x))<c2^{-ns}$ for all $n\geq n(x).$
Consider a set $F_{n_0}=\{x\in F: n(x)=n_0\}.$ Note that if $x\in F_{n_0}$ then 
\begin{equation}\label{Knx} \mu(K_n(x))<c2^{-ns} \, \, \text{for all}\, \,  n\geq n_{n_0}.
\end{equation}

Let $\{I_i\}_i$ be a $\delta$-cover of the set $F$ with 
$\delta=2^{-n_0}.$  Let $I_{i(x)}$ be the element of this cover for which $x\in F_{n_0}\cap I_{i(x)}$ and assume that $2^{-(n+1)}\leq \text{diam}\, I_{i(x)}<2^{-n},$ $n\geq n_0.$ Then having in mind (\ref{rhoKn})  we obtain that $I_{i(x)}\subset K_n(x).$  Then, using  (\ref{Knx}), we obtain
$$\mu I_{i(x)}\leq\mu K_n(x)< c2^{-ns}= c 2^s2^{-(n+1)s}\leq c2^s(\text{diam} \, I_{i(x)})^s$$
so that
$$\mu F_{n_0}\leq\sum\{\mu I_i : I_i \, \text{intersects} \,  F_{n_0}\}\leq  c2^s\sum_i(\text{diam} \, I_i)^s$$ and according to (\ref{Hsdelta}) and (\ref{Hsmeasure}) $\mu F_{n_0}\leq c2^SH_{\delta}^s(F)\leq c2^sH^s(F).$ Note that the $F_{n_0}$ increases to $F$ with increasing $n_0.$  So it follows from the continuity of measure $\mu$  that $\mu F\leq c2^sH^s(F)$ and we finally obtain $$H^s(E)\geq H^s(F)\geq 2^{-s}\mu F/c=2^{-s}\mu E/c.$$ As $c$ can be arbitrary small and $\mu E$ is supposed to be positive, we get $H^s(E)=\infty.$  Hence by (\ref{Hdim}) $\dim_H(E)\geq s.$ and the theorem is proved.
\end{proof}

\subsection{$s$-energy in terms of Walsh-Fourier coefficients}

\begin{theorem}\label{Fourier}
	The $s$-energy of a mass distribution $\mu$ on $G$ can be expressed as
	$$I_s(\mu)=\sum_{k=0}^{\infty}\widehat{\phi_s}(k)(\widehat{\mu}(k))^2$$
$($infinite values are not excluded in this equation$)$.
\end{theorem}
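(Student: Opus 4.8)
The plan is to expand the $s$-energy integral using the Walsh–Fourier expansion of the kernel, interchange summation and integration, and recognize the resulting coefficients as $\widehat\mu(k)$. First I would work with the truncated kernels $\phi_s^n$, since Lemma~\ref{lemma} guarantees these are genuine Walsh polynomials, $\phi_s^n=\sum_{k=0}^{2^n-1}\widehat{\phi_s^n}(k)\,w_k$, with all finitely many sums legitimate. Using the fact that the Walsh functions are the characters of $G$, one has $w_k(x-y)=w_k(x)w_k(y)$ (the group being of exponent $2$, so $x-y=x+y$ and $\overline{w_k}=w_k$), hence
\begin{equation*}
	I_s^n(\mu)=\int_G\int_G\phi_s^n(x-y)\,d\mu(y)\,d\mu(x)
	=\sum_{k=0}^{2^n-1}\widehat{\phi_s^n}(k)\int_G w_k(x)\,d\mu(x)\int_G w_k(y)\,d\mu(y)
	=\sum_{k=0}^{2^n-1}\widehat{\phi_s^n}(k)\,\bigl(\widehat{\mu}(k)\bigr)^2,
\end{equation*}
where the interchange of the finite sum with the double integral is trivial and $\widehat\mu(k):=\int_G w_k\,d\mu$ is real because $w_k$ is real-valued.

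The remaining task is to pass to the limit $n\to\infty$ on both sides. On the left, $\Phi_s^n(x)=\int_G\phi_s^n(x-y)\,d\mu(y)\nearrow\Phi_s(x)$ pointwise by the monotone convergence theorem, since $\phi_s^n\nearrow\phi_s$; a second application of monotone convergence to $\int_G\Phi_s^n\,d\mu$ gives $I_s^n(\mu)\nearrow I_s(\mu)$, with the value $+\infty$ allowed. On the right, each term is monotone: by Lemma~\ref{lemma}, $\widehat{\phi_s^n}(k)\geq 0$ and $\widehat{\phi_s^n}(k)\nearrow\widehat{\phi_s}(k)$ for every fixed $k$, while $\bigl(\widehat\mu(k)\bigr)^2\geq 0$ is independent of $n$. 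Therefore the partial sums $\sum_{k=0}^{2^n-1}\widehat{\phi_s^n}(k)(\widehat\mu(k))^2$ increase in $n$, and their limit is $\sum_{k=0}^{\infty}\widehat{\phi_s}(k)(\widehat\mu(k))^2$: indeed for each fixed $N$ and $n$ with $2^n>N$ the partial sum dominates $\sum_{k=0}^{N}\widehat{\phi_s^n}(k)(\widehat\mu(k))^2\to\sum_{k=0}^{N}\widehat{\phi_s}(k)(\widehat\mu(k))^2$, giving $\liminf\geq\sum_{k=0}^{N}\widehat{\phi_s}(k)(\widehat\mu(k))^2$ for every $N$, hence $\liminf\geq\sum_{k=0}^\infty$; the reverse inequality $\limsup\leq\sum_{k=0}^\infty$ follows from $\widehat{\phi_s^n}(k)\leq\widehat{\phi_s}(k)$. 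Equating the two limits yields the claimed identity.

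The only delicate point is the justification that $I_s^n(\mu)\to I_s(\mu)$, i.e. that the double-integral monotone convergence argument is valid; this requires Tonelli/monotone convergence for the nonnegative integrand $\phi_s^n(x-y)$ on the product measure $\mu\times\mu$, which is a finite (indeed a probability, after normalization) measure, so there is no integrability obstruction and the order of the two integrations is immaterial throughout. One should also note that $\widehat\mu(0)=\mu(G)$ and $\widehat{\phi_s}(0)=\int_G\phi_s\,d\lambda<\infty$ (as observed at the end of the proof of Lemma~\ref{lemma}), so the $k=0$ term is finite and causes no trouble; all genuinely divergent behaviour, if any, is carried by the tail $k\to\infty$, consistently with the parenthetical remark that infinite values are permitted.
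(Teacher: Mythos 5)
Your proposal is correct and follows essentially the same route as the paper: express $I_s^n(\mu)$ as the finite sum $\sum_{k=0}^{2^n-1}\widehat{\phi_s^n}(k)(\widehat{\mu}(k))^2$ using that $\phi_s^n$ is a Walsh polynomial (Lemma~\ref{lemma}), then pass to the limit via monotone convergence on both sides. The only cosmetic difference is that you verify the convolution--Fourier identity directly from the character property $w_k(x-y)=w_k(x)w_k(y)$, whereas the paper cites the convolution theorem, and you spell out the term-by-term limit of the series in more detail than the paper does.
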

\begin{proof}
	The definitions of the kernels $\phi_s$ and $\phi_s^n$ imply
	$$\Phi_s^n(x)=\int_G\phi_s^n(x-y)\,d\mu(y) \nearrow \int_G\phi_s(x-y)\,d\mu(y)=\Phi_s(x) \, \, \text{with}\, \,  n\to \infty.$$
	Then 
	\begin{equation}\label{222}
		I_s^n(\mu)=\int_G \Phi_s^n(x)\, d\mu(x)\nearrow \int_G \Phi_s(x)\, d\mu(x)=I_s(\mu).
	\end{equation}
	The last pasage to the limit does not exclude the infinite value (remember that all elements of the sequences in the above equations are positive). 
	Using the known formula (see \cite{Rudin}) for Fourier transfprm of convolution on groups, we get .
	$$\widehat{\Phi_s^n}(k)=\widehat{\phi_s^n}(k)\widehat{\mu}(k).$$
	As $\phi_s^n$ is Walsh polinomial (see Lemma \ref{lemma}) we have
	$$\Phi_s^n(x)=\sum_{k=0}^{2^n-1}\widehat{\phi_s^n}(k)\widehat{\mu}(k)w_k(x)$$
	Integrating over $G$ with respect to $\mu$ we obtain 
	$$I_s^n=\sum_{k=0}^{2^n-1}\widehat{\phi_s^n}(k)(\widehat{\mu}(k))^2$$
	Now using (\ref{222}) and Lemma \ref{lemma} we can pass to the limit in the last equality and obtain the statement of the theorem.
\end{proof}
As a corollary of Theorem \ref{main} and \ref{Fourier} we get the following result which relates Hausdorff dimention of a set on $G$ to Walsh-Fourier coefficients of a mass distribution supported on this set. 
\begin{theorem}
	Let $E$ be a subset of the group $G$. If there is a non-atomic mass distribution $\mu$ on the set $E$ such that the series $\sum_{k=0}^{\infty}\widehat{\phi_s}(k)(\widehat{\mu}(k))^2$ is convergent, then $E$ has Hausdorff dimention  at least $s$.
\end{theorem}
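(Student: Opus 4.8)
The plan is simply to chain the two preceding theorems; no new estimates are required. The first step is to apply Theorem~\ref{Fourier}, which asserts the identity
$$I_s(\mu)=\sum_{k=0}^{\infty}\widehat{\phi_s}(k)(\widehat{\mu}(k))^2$$
as an equality in $[0,\infty]$. Before using it I would observe that every term of the series is nonnegative: $\widehat{\phi_s}(k)\geq 0$ since by Lemma~\ref{lemma} it is the limit of the increasing sequence of positive numbers $\widehat{\phi_s^n}(k)$, and $(\widehat{\mu}(k))^2\geq 0$ trivially. Hence the series converges if and only if its sum is finite, and the hypothesis of the theorem is precisely the statement $I_s(\mu)<\infty$.

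The second step is to feed this into Theorem~\ref{main}: we are then given a non-atomic mass distribution $\mu$ supported on $E$ with finite $s$-energy, so that theorem yields $\dim_H(E)\geq s$, which is the claim.

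The one subtlety worth a sentence is the status of the equality in Theorem~\ref{Fourier} when some of the quantities might a priori be infinite: because $I_s^n(\mu)\nearrow I_s(\mu)$ and each $I_s^n(\mu)$ equals the partial sum $\sum_{k=0}^{2^n-1}\widehat{\phi_s^n}(k)(\widehat{\mu}(k))^2$, while $\widehat{\phi_s^n}(k)\nearrow\widehat{\phi_s}(k)$ by Lemma~\ref{lemma}, monotone convergence makes the identity hold in $[0,\infty]$, so finiteness of the right-hand side genuinely transfers to $I_s(\mu)$. I do not expect any real obstacle here: all the analytic content already lives in Theorems~\ref{main} and~\ref{Fourier}, and the present statement is their direct conjunction.
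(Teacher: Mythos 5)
Your proposal is correct and follows exactly the route the paper intends: the statement is presented there as a direct corollary of Theorem~\ref{main} and Theorem~\ref{Fourier}, with convergence of the nonnegative series amounting to finiteness of $I_s(\mu)$. Your extra remark on the equality holding in $[0,\infty]$ via monotone convergence is a sound (and welcome) clarification, not a deviation.
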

\begin{remark}
	Elementary but rather tedious computation shows that Walsh-Fourier coefficient of the kernel $\phi_s$ satisfies the condition $\widehat{\phi_s(k)}=O(k^{s-1})$. So convergence of the series $\sum k^{s-1}(\widehat{\mu}(k))^2$ is a sufficient condition for the finiteness of the $s$-energy and the corresponding inequality for the Hausdorff dimension of the respective set.
\end{remark}

\end{document}